\newtheorem{theorem}{Theorem}[section]
\newtheorem{lemma}[theorem]{Lemma}
\author{\ \\ \\
Vikram Kamat\thanks{\texttt{vkamat@csa.iisc.ernet.in}}\\
{\small Department of Computer Science \& Automation}\\
{\small Indian Institute of Science, Bangalore -- 560 012, India.}\\ \\ \\
}
\title{On $k$-wise intersecting families of vertex sets in perfect matchings}
\begin{document}

\maketitle

\begin{abstract}
We consider the following generalization of the seminal Erd\H{o}s-Ko-Rado theorem, due to Frankl. For some $k\geq 2$, let $\mathcal{F}$ be a $k$-wise intersecting family of $r$-subsets of an $n$ element set $X$, i.e. for any $F_1,\ldots,F_k\in \mathcal{F}$, $\cap_{i=1}^k F_i\neq \emptyset$. If $r\leq \frac{(k-1)n}{k}$, then $|\mathcal{F}|\leq {n-1 \choose r-1}$. We extend Frankl's theorem in a graph-theoretic direction. For a graph $G$, and $r\geq 1$, let $\mathscr{P}^r(G)$ be the family of all $r$-subsets of the vertex set of $G$ such that every $r$-subset is either an independent set or contains a maximum independent set. We will consider $k$-wise intersecting subfamilies of this family for the graph $M_n$, where $M_n$ is the perfect matching on $2n$ vertices, and prove an analog of Frankl's theorem. This result can also be considered as an extension of a theorem of Bollob\'as and Leader for intersecting families of independent vertex sets in $M_n$.

\noindent{\bf Key words.}
intersecting families, independent sets, perfect matchings.
\end{abstract}

\section{Introduction}
For a positive integer $n$, let $[n]=\{1,2,\ldots,n\}$. For positive integers $i$ and $j$ with $i\leq j$, let $[i,j]=\{i,i+1,\ldots,j\}$ ($[i,j]=\emptyset$ if $i>j$). Similarly let $(i,j]=\{i+1,\ldots,j\}$, which is empty if $i+1>j$. The notations $(i,j)$ and $[i,j)$ are similarly defined. Let ${[n] \choose r}$ be the family of all $r$-subsets of $[n]$. For $\mathcal{F}\subseteq {[n] \choose r}$ and $v\in [n]$, let $\mathcal{F}_v=\{F\in \mathcal{F}:v\in F\}$, called a \textit{star} in $\mathcal{F}$, centered at $v$. A family $\mathcal{F}\subseteq {[n] \choose r}$ is said to be $k$-wise intersecting if for any $F_1,\ldots, F_k\in \mathcal{F}$, $\bigcap_{i=1}^k F_i\neq \emptyset.$ If $k=2$, we say that $\mathcal{F}$ is intersecting. It is trivial to note that for any $k\geq 2$, if $\mathcal{F}$ is $k$-wise intersecting, then it is also intersecting. Frankl \cite{fr} proved the following theorem for $k$-wise intersecting families.
\begin{theorem}[Frankl]\label{frankwise}
Let $\mathcal{F}\subseteq {[n] \choose r}$ be $k$-wise intersecting. If $r\leq \frac{(k-1)n}{k}$, then $|\mathcal{F}|\leq {n-1 \choose r-1}$.
\end{theorem}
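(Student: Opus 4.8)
I would prove Theorem~\ref{frankwise} by the shifting (compression) method combined with induction on $n$; the genuinely hard case turns out to be the largest admissible value of $r$.

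\emph{Reducing to a left-compressed family.} For $1\le i<j\le n$ let $S_{ij}$ be the usual shift: $S_{ij}(F)=(F\setminus\{j\})\cup\{i\}$ when $j\in F$, $i\notin F$ and $(F\setminus\{j\})\cup\{i\}\notin\mathcal F$, and $S_{ij}(F)=F$ otherwise; put $S_{ij}(\mathcal F)=\{S_{ij}(F):F\in\mathcal F\}$. These operators preserve $r$-uniformity and $|\mathcal F|$, and --- by essentially the case analysis used for ordinary intersecting families, now keeping track of which of the $k$ chosen members got moved --- they also preserve the $k$-wise intersecting property. Iterating until the family stabilizes, we may assume $\mathcal F$ is left-compressed, and then it has the standard domination property: if $F=\{f_1<\dots<f_r\}\in\mathcal F$ and $G=\{g_1<\dots<g_r\}$ satisfies $g_i\le f_i$ for every $i$, then $G\in\mathcal F$; in particular $[r]\in\mathcal F$ when $\mathcal F\ne\emptyset$.

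\emph{The induction on $n$.} Given a left-compressed $k$-wise intersecting $\mathcal F\subseteq\binom{[n]}{r}$ with $r\le\frac{(k-1)n}{k}$, split off the last point: set $\mathcal F_0=\{F\in\mathcal F:n\notin F\}\subseteq\binom{[n-1]}{r}$ and $\mathcal F_1=\{F\setminus\{n\}:n\in F\in\mathcal F\}\subseteq\binom{[n-1]}{r-1}$, so $|\mathcal F|=|\mathcal F_0|+|\mathcal F_1|$. The family $\mathcal F_0$ is again $k$-wise intersecting; a short computation shows $r-1\le\frac{(k-1)(n-1)}{k}$ always holds, whereas $r\le\frac{(k-1)(n-1)}{k}$ holds exactly when $n>\lceil\tfrac{kr}{k-1}\rceil$. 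Hence for every admissible $n$ except the smallest, the inductive hypothesis gives $|\mathcal F_0|\le\binom{n-2}{r-1}$ and --- granting that $\mathcal F_1$ is $k$-wise intersecting --- $|\mathcal F_1|\le\binom{n-2}{r-2}$, so that $|\mathcal F|\le\binom{n-2}{r-1}+\binom{n-2}{r-2}=\binom{n-1}{r-1}$, as wanted.

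\emph{Why $\mathcal F_1$ is $k$-wise intersecting.} Suppose $G_1,\dots,G_k\in\mathcal F_1$ with $\bigcap_{i=1}^kG_i=\emptyset$; writing $G_i=F_i\setminus\{n\}$ with $n\in F_i\in\mathcal F$ this says $\bigcap_{i=1}^kF_i=\{n\}$. I would hunt for $m\in[n-1]$ with $m\notin F_1$ and $m\notin F_j$ for some $j\ge2$. The first condition lets $S_{m,n}$ act on $F_1$, producing $F_1'=(F_1\setminus\{n\})\cup\{m\}\in\mathcal F$; since $n\notin F_1'$, any common element of $F_1',F_2,\dots,F_k$ lies in $[n-1]$, hence (if it lies in $F_1\setminus\{n\}$) in $\bigcap_iF_i=\{n\}$ --- impossible --- or else equals $m$, excluded by the second condition. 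So $F_1'\cap F_2\cap\dots\cap F_k=\emptyset$, contradicting the $k$-wise intersecting property of $\mathcal F$. Such an $m$ fails to exist only if $[n-1]\setminus F_1\subseteq F_j$ for all $j\ge2$, i.e.\ $F_1\cup F_j=[n]$; as the same reasoning works with any $F_i$ in place of $F_1$, the only escape is $F_i\cup F_j=[n]$ for all $i\ne j$, whence the complements $[n]\setminus F_i$ are pairwise disjoint $(n-r)$-sets and $k(n-r)\le n$. But in the inductive step $n>\lceil\tfrac{kr}{k-1}\rceil$, so $r<\frac{(k-1)n}{k}$ strictly and $k(n-r)>n$; this contradiction rules out the degenerate configuration, so $\mathcal F_1$ is $k$-wise intersecting.

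\emph{The main obstacle: the base case.} What remains is the smallest admissible $n$, equivalently the largest admissible $r$: $r=\lfloor\tfrac{(k-1)n}{k}\rfloor$, i.e.\ $s:=n-r=\lceil\tfrac nk\rceil$. Passing to complements, the statement to prove is: if $\mathcal G\subseteq\binom{[n]}{s}$ has no $k$ members whose union is $[n]$, then $|\mathcal G|\le\binom{n-1}{s}$. For $k=2$ this is the Erd\H{o}s--Ko--Rado theorem at its threshold $r=\lfloor n/2\rfloor$; for $n=2k$ it is the classical fact that a graph on $2k$ vertices with no perfect matching has at most $\binom{2k-1}{2}$ edges. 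I expect this to be the real crux of the proof. The parameters are favorable --- $ks<n+s$, so any $k$ members of $\mathcal G$ are ``almost a partition'' of $[n]$, the total overlap being smaller than $s$ --- and I would attack it by a dedicated compression argument (a left-compressed $\mathcal G$ above the bound is forced by domination to contain very structured configurations, e.g.\ a few members already covering $[n]$) or by an appeal to a Tutte-type / covering-design extremal bound. This is the one place where the hypothesis $r\le\frac{(k-1)n}{k}$ is used in an essential way, and where the genuine work lies.
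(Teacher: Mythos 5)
Your inductive machinery is essentially sound: shifting does preserve the $k$-wise intersecting property, the split into $\mathcal F_0$ and $\mathcal F_1$ is standard, and your argument that $\mathcal F_1$ is $k$-wise intersecting (using left-compressedness to replace $n$ by a missing element $m$, with the degenerate configuration killed by $k(n-r)>n$) is correct as long as $n$ lies strictly above the threshold. But the proposal has a genuine gap exactly where you yourself flag it: the base case $r=\lfloor\frac{(k-1)n}{k}\rfloor$ is not proved. This is not a routine boundary verification that can be deferred --- it carries the entire content of the theorem. Already for $k=2$ the base case is the Erd\H{o}s--Ko--Rado theorem at $r=\lfloor n/2\rfloor$, which itself requires a real argument (Katona's cycle method or an equivalent), and for general $k$ the complemented statement you formulate (no $k$ members of $\mathcal G\subseteq\binom{[n]}{s}$ with union $[n]$, $s=\lceil n/k\rceil$, forces $|\mathcal G|\le\binom{n-1}{s}$) is precisely Frankl's theorem in its hardest instance; neither the ``dedicated compression argument'' nor the ``Tutte-type / covering-design bound'' you gesture at is carried out or reduced to a citable result. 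As written, the proof reduces the theorem to its extremal case and stops there.

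By contrast, the argument the paper relies on (Frankl's generalization of Katona's cycle method) handles all admissible $r$, including the threshold, in one stroke: Lemma \ref{lem1} shows by a short index-assignment count that in any cyclic order at most $r$ of the $n$ intervals of length $r$ can belong to a $k$-wise intersecting family whenever $r\le\frac{(k-1)n}{k}$, and averaging over all cyclic orders immediately yields $|\mathcal F|\le\binom{n-1}{r-1}$. So to complete your route you would either have to prove the threshold case directly --- at which point you are essentially forced to reproduce an argument of the strength of Lemma \ref{lem1} --- or replace the induction altogether. Supplying that missing base case is the real work, and until it is done the proposal does not constitute a proof of Theorem \ref{frankwise}.
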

It can be easily observed that the $k=2$ case of Theorem \ref{frankwise} is the Erd\H{o}s--Ko--Rado theorem \cite{ekr}.
\begin{theorem}[Erd\H{o}s--Ko--Rado]\label{ekr}
Let $\mathcal{F}\subseteq {[n] \choose r}$ be intersecting. If $r\leq n/2$, then $|\mathcal{F}|\leq {n-1 \choose r-1}$.
\end{theorem}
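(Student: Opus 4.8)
The fastest route is to observe that Theorem~\ref{ekr} is literally the special case $k=2$ of Theorem~\ref{frankwise}: for $k=2$ the hypothesis $r\le\frac{(k-1)n}{k}$ becomes $r\le n/2$ and the conclusion is the same, so if Theorem~\ref{frankwise} is available there is nothing left to do. Since a self-contained argument is more illuminating (and historically predates Frankl's generalization), I would instead present Katona's cyclic-permutation proof.

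The plan has two ingredients. First, fix a cyclic ordering of $[n]$ --- picture $1,\dots,n$ placed around a circle --- and call a block of $r$ cyclically consecutive elements an \emph{arc}; there are exactly $n$ arcs. I would prove the following lemma: if $r\le n/2$, then an intersecting family $\mathcal{F}$ contains at most $r$ of these $n$ arcs. To see this, suppose some arc $A\in\mathcal{F}$. Every arc other than $A$ that meets $A$ can be sorted into $r-1$ pairs, each pair consisting of an arc that ``enters'' $A$ and an arc that ``leaves'' $A$ at the same boundary gap; the hypothesis $r\le n/2$ forces the two arcs in each pair to be disjoint, so at most one of them lies in $\mathcal{F}$. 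Adding $A$ itself gives the bound $r$.

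Second, I would double count over all $n!$ ways of arranging $[n]$ around the circle. A fixed $r$-set occurs as an arc in exactly $n\cdot r!\,(n-r)!$ of these arrangements, while by the lemma each arrangement accounts for at most $r$ members of $\mathcal{F}$ that appear as arcs. Counting incident pairs (arrangement, member of $\mathcal{F}$ that is an arc in it) in two ways gives
\[
|\mathcal{F}|\cdot n\cdot r!\,(n-r)!\;\le\;r\cdot n!,
\]
and rearranging yields $|\mathcal{F}|\le\frac{r}{n}\binom{n}{r}=\binom{n-1}{r-1}$.

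The only real obstacle --- and the only place the hypothesis $r\le n/2$ is used --- is the arc lemma: one must set up the pairing of the arcs meeting $A$ carefully and verify the disjointness claim. (For $r>n/2$ the lemma genuinely fails, since then any two arcs of length $r$ intersect and so all $n$ of them form an intersecting family.) Everything else is routine counting.
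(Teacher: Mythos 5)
Your proposal is correct, and its first route is exactly how the paper treats this statement: the paper offers no separate proof of Theorem~\ref{ekr}, merely the observation that it is the $k=2$ case of Theorem~\ref{frankwise} (with the original result cited to Erd\H{o}s--Ko--Rado). Your self-contained Katona argument is also sound, and it is worth noting how it relates to the paper's machinery: your arc lemma is the $k=2$ instance of Lemma~\ref{lem1}, but you prove it by the classical pairing of the $r-1$ boundary gaps (each gap contributing two disjoint arcs when $r\le n/2$, so at most one per gap lies in $\mathcal{F}$), whereas Frankl's proof in the paper passes to the complementary intervals and counts assigned versus unassigned ending indices modulo $n-r$ --- an argument designed to work for all $k$ simultaneously, which the simple pairing does not. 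Your double count over the $n!$ circular arrangements, giving $|\mathcal{F}|\cdot n\cdot r!\,(n-r)!\le r\cdot n!$ and hence $|\mathcal{F}|\le \frac{r}{n}{n \choose r}={n-1 \choose r-1}$, is the same averaging step the paper performs (over good cyclic orderings) in the proof of Theorem~\ref{matching}. So what your version buys is a shorter, more elementary proof of the $k=2$ arc bound; what the paper's version buys is the uniform treatment of $k$-wise intersection needed for Frankl's theorem and for the main result.
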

\subsection{Perfect matchings}
We consider a graph-theoretic generalization of Theorem \ref{frankwise}. For a graph $G$ (with vertex set and edge set denoted by $V(G)$ and $E(G)$ respectively), let $\alpha=\alpha(G)$ be the independence number of $G$, i.e. the size of a maximum independent set in $G$. We define two families of vertex sets of $G$ as follows. Let $\mathscr{I}(G)$ be the family of all independent sets in $G$. Similarly, let $\mathscr{M}(G)$ be the family of all sets containing an independent set of size $\alpha$. Let $\mathscr{P}(G)=\mathscr{M}(G)\cup \mathscr{I}(G)$. For any positive integer $r$, let $\mathscr{P}^r(G)=\{A\in \mathscr{P}(G):|A|=r\}$, i.e. $\mathscr{P}^r(G)$ is the $r$-uniform subfamily of $\mathscr{P}(G)$. Define the families $\mathscr{M}^r(G)$ and $\mathscr{I}^r(G)$ analogously. Also, for any vertex $x\in V(G)$, let $\mathscr{P}^r_x(G)$, $\mathscr{M}^r_x(G)$ and $\mathscr{I}^r_x(G)$ be the stars centered at $x$, in the families $\mathscr{P}^r(G)$, $\mathscr{M}^r(G)$ and $\mathscr{I}^r(G)$ respectively.

We consider the perfect matching graph on $2n$ vertices (and $n$ edges) and denote it by $M_n$. We will consider $k$-wise intersecting families in $\mathscr{P}^r(M_n)$, and prove the following analog of Frankl's theorem.
\begin{theorem}\label{thm2}\label{matching}
For $k\geq 2$, let $r\leq \frac{(k-1)(2n)}{k}$, and let $\mathcal{F}\subseteq \mathscr{P}^r(M_n)$ be $k$-wise intersecting. Then,
\begin{displaymath}
|\mathcal{F}| \leq \left \{
     \begin{array}{ll}
       2^{r-1}{n-1 \choose r-1} & \textrm{ if } r\leq n \\
       2^{2n-r}{n-1 \choose 2n-r}+2^{2n-r-1}{n-1 \choose 2n-r-1} & \textrm{ otherwise}
     \end{array}
   \right.
\end{displaymath}
If $r<\frac{(k-1)(2n)}{k}$, then equality holds if and only if $\mathcal{F}=\mathscr{P}^r_x(M_n)$\footnote{$|\mathscr{P}^r_x(M_n)|=2^{2n-r}{n-1 \choose 2n-r}+2^{2n-r-1}{n-1 \choose 2n-r-1}$, when $r>n$.} for some $x\in V(M_n)$.
\end{theorem}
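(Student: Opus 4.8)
The plan is to prove Theorem~\ref{matching} by a Katona-type permutation argument, but carried out on a cycle through the $2n$ \emph{vertices} of $M_n$ rather than on the $n$ edges. Call a bijection $\phi\colon V(M_n)\to\mathbb{Z}_{2n}$ an \emph{antipodal placement} if, for every edge $\{x,y\}$ of $M_n$, $\{\phi(x),\phi(y)\}=\{p,p+n\}$ for some $p$; there are exactly $n!\,2^{n}$ of these. The point of this choice is that an arc of at most $n$ consecutive positions of $\mathbb{Z}_{2n}$ contains no antipodal pair, hence its $\phi$-preimage is independent in $M_n$, while an arc of at least $n$ consecutive positions has complement an arc of at most $n$ positions, so it meets every edge and therefore its preimage contains a maximum independent set (of size $\alpha(M_n)=n$). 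Consequently, under any antipodal placement $\phi$, every one of the $2n$ arcs of $r$ consecutive positions (note $r\le\frac{(k-1)(2n)}{k}<2n$) has $\phi$-preimage in $\mathscr{P}^{r}(M_n)$; and since $\phi$ is a bijection it preserves intersections, so if $\mathcal{F}$ is $k$-wise intersecting, the arcs whose preimages lie in $\mathcal{F}$ form a $k$-wise intersecting family of $r$-arcs on $\mathbb{Z}_{2n}$.

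The technical heart is a cyclic analogue of Frankl's theorem (Theorem~\ref{frankwise}): \emph{if $\mathcal{A}$ is a $k$-wise intersecting family of $r$-arcs on $\mathbb{Z}_{m}$ with $r\le\frac{(k-1)m}{k}$, then $|\mathcal{A}|\le r$, with equality (when $r<\frac{(k-1)m}{k}$) only if all arcs of $\mathcal{A}$ share a common point.} Granting this with $m=2n$ --- exactly the range $r\le\frac{(k-1)(2n)}{k}$ hypothesised --- at most $r$ arcs of each antipodal placement have preimage in $\mathcal{F}$. I would then double count pairs $(A,\phi)$ with $A\in\mathcal{F}$ and $A$ the $\phi$-preimage of some $r$-arc. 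The number of antipodal placements realising a fixed $A\in\mathscr{P}^{r}(M_n)$ as an arc is the same for all $A$, because $\mathrm{Aut}(M_n)=\mathbb{Z}_2\wr S_n$ acts transitively on $\mathscr{P}^{r}(M_n)$ (separately for $r\le n$, where $\mathscr{P}^{r}=\mathscr{I}^{r}$, and for $r\ge n$, where $\mathscr{P}^{r}=\mathscr{M}^{r}$); call this constant $c_r$. Counting all incidences gives $|\mathscr{P}^{r}(M_n)|\cdot c_r=n!\,2^{n}\cdot 2n$, and restricting to $\mathcal{F}$,
\[
|\mathcal{F}|\cdot c_r \;=\; \sum_{\phi}\#\{\,r\text{-arcs of }\phi\text{ whose preimage lies in }\mathcal{F}\,\}\;\le\; n!\,2^{n}\cdot r .
\]
Dividing, and using $|\mathscr{P}^{r}(M_n)|=2^{r}\binom{n}{r}$ when $r\le n$ and $|\mathscr{P}^{r}(M_n)|=2^{2n-r}\binom{n}{2n-r}$ when $r\ge n$, one gets $|\mathcal{F}|\le\frac{r}{2n}\,|\mathscr{P}^{r}(M_n)|$, which simplifies to $2^{r-1}\binom{n-1}{r-1}$ in the first case and to $2^{2n-r}\binom{n-1}{2n-r}+2^{2n-r-1}\binom{n-1}{2n-r-1}$ in the second (the two agreeing at $r=n$). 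This single computation covers both cases of the theorem at once.

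For the equality statement when $r<\frac{(k-1)(2n)}{k}$: equality above forces every antipodal placement $\phi$ to have exactly $r$ of its arcs with preimage in $\mathcal{F}$, and by the uniqueness clause of the cyclic lemma these $r$ arcs all contain a common position $p_\phi$, equivalently all their preimages contain the vertex $x_\phi=\phi^{-1}(p_\phi)$. One then shows $x_\phi$ does not depend on $\phi$: passing between antipodal placements by elementary moves (swapping the positions assigned to two edges, or flipping the orientation of a single edge) and checking that the distinguished vertex is preserved, together with connectedness of the set of antipodal placements under these moves, produces a single vertex $x$ with $\mathcal{F}\subseteq\mathscr{P}^{r}_{x}(M_n)$; the size bound then forces $\mathcal{F}=\mathscr{P}^{r}_{x}(M_n)$.

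The main obstacle is the cyclic lemma, especially for $k\ge 3$ and including its extremal characterisation. For $k=2$ it is the classical fact (behind Katona's proof of Theorem~\ref{ekr}) that a maximal intersecting family of $r$-arcs on a cycle of length $m\ge 2r$ is a star of arcs. For general $k$ I would argue via complements: $\mathcal{A}$ is $k$-wise intersecting precisely when no $k$ of the complementary $(m-r)$-arcs cover $\mathbb{Z}_m$; since $r\le\frac{(k-1)m}{k}$ gives complementary length $m-r\ge m/k$, and since $|\mathcal{A}|\ge r+1$ forces the start-points of the complementary arcs to miss at most $m-r-1$ points of $\mathbb{Z}_m$ (so every length-$(m-r)$ window contains such a start-point), a greedy selection produces a covering $k$-tuple, the governing inequality reducing exactly to $(k-1)(m-r)\ge r$. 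Alternatively one can attempt a compression argument, shifting an arc $[x,y]$ to $[x-1,y-1]$ whenever possible and verifying that $k$-wise intersection is preserved. Making this bookkeeping airtight, and pinning down the extremal families, is where the real work lies; the permutation count built on top of it is routine.
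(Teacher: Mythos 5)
Your outline is essentially the paper's own approach: your ``antipodal placements'' are exactly its good cyclic orderings (it counts $2^{n-1}(n-1)!$ of them up to rotation, i.e. $n!\,2^{n}$ placements), your incidence count reproduces its double count and yields the same bounds, and your ``cyclic lemma'' is precisely its Lemmas \ref{lem1} and \ref{lem2}. The difficulty is that everything you defer is where the content of the proof lies. For the bound, your greedy sketch of the cyclic lemma does not go through as stated: from the fact that every window of length $m-r$ contains the start of a complementary arc, a greedy choice of successive starts is only guaranteed to advance the covered region by one position per step (or by more than $m-r$ only every other step), which falls short of producing a covering $k$-tuple under the hypothesis $(k-1)(m-r)\ge r$; Frankl's actual argument (Lemma \ref{lem1}) assigns the indices of $[k(m-r)]$ to complementary arcs and counts over residue classes modulo $m-r$, and it is this device, not a greedy selection, that makes the inequality close (including at the boundary $r=(k-1)m/k$). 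The equality characterization of the cyclic lemma (Lemma \ref{lem2}), on which your whole uniqueness argument rests, is a separate nontrivial induction that you do not attempt at all.

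The uniqueness step has the same problem. ``Checking that the distinguished vertex is preserved'' under elementary moves is not a routine verification; it is the content of the paper's Lemmas \ref{l1} and \ref{l2}. The paper deliberately restricts to adjacent transpositions $T_i$, which change only four of the $2n$ arcs of length $2n-r$, and to the single swap $W_{n-1}$ next to the anchored position, and even then the check requires a case analysis with a genuinely delicate boundary case $3n=2r$ (where one must exhibit five complementary arcs covering $[2n]$) plus small ad hoc cases. Your proposed moves (arbitrary transpositions of edge-positions, flipping any edge) perturb many arcs at once, so the invariance claim would be harder, not easier, to verify, and you would still need connectedness under whatever restricted moves you can actually control. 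You also miss the boundary $r=n$: there the flip moves are unavailable (the paper's Lemma \ref{l2} needs $r>n$), and the paper handles that case separately using only transpositions within the subfamily $C_{n-1}$ of orderings. Your final deduction (once $\mathcal{F}\subseteq\mathscr{P}^r_x(M_n)$ is known, the size bound forces equality) is fine and slightly cleaner than the paper's closing construction, but the two lemmas you label as ``where the real work lies'' are the proof, and your sketches of them do not close the argument.
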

It is not hard to observe that the $k=2$ case of Theorem \ref{matching} is the following theorem of Bollob\'as and Leader \cite{bolead}.
\begin{theorem}[Bollob\'as-Leader]\label{bl}
Let $1\leq r\leq n$, and let $\mathcal{F}\subseteq \mathscr{I}^r(M_n)$ be an intersecting family. Then, $|\mathcal{F}|\leq 2^{r-1}{n-1 \choose r-1}$. If $r<n$, equality holds if and only if $\mathcal{F}=\mathscr{I}^r_x(M_n)$ for some $x\in V(M_n)$.
\end{theorem}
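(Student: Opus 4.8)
The plan is to prove the bound by a variant of Katona's cyclic-permutation method, adapted to the matching by placing the two endpoints of each edge at antipodal positions on a cycle of length $2n$. Call a placement of $V(M_n)$ on the cycle $C_{2n}$ (positions indexed by $\mathbb{Z}/2n\mathbb{Z}$) \emph{admissible} if the two vertices of every edge occupy antipodal positions $p$ and $p+n$. Since $r\le n$, any arc of $r$ consecutive positions spans a range of diameter $r-1<n$, so it meets no antipodal pair; hence every such arc is an independent $r$-set, i.e. an element of $\mathscr{I}^r(M_n)$. Moreover two arcs of length $r$ share a vertex precisely when they overlap as arcs: each position carries a distinct vertex, and the two vertices of any common edge sit $n\ge r$ apart, so they never create a spurious intersection. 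Consequently, for any admissible arrangement the arcs that lie in $\mathcal{F}$ form a pairwise-overlapping family of $r$-arcs on $C_{2n}$, and Katona's circle lemma (applicable because $r\le n=(2n)/2$) bounds their number by $r$.

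Next I would verify the averaging identity. A routine count shows that each $A\in\mathscr{I}^r(M_n)$ occurs as an arc in exactly $N:=2n\cdot r!\,(n-r)!\,2^{\,n-r}$ pairs (admissible arrangement, starting position): placing the $r$ vertices of $A$ along the arc forces their partners into the antipodal positions and leaves the remaining $n-r$ edges free to be assigned to the other position-pairs and oriented arbitrarily, and this value does not depend on $A$. Double-counting the pairs (arrangement, position) whose arc lies in $\mathcal{F}$ then gives
\[
|\mathcal{F}|\cdot N \;=\; \sum_{\rho}\#\{p : \operatorname{arc}_\rho(p)\in\mathcal{F}\}\;\le\;(n!\,2^{n})\,r,
\]
where $\rho$ ranges over the $n!\,2^{n}$ admissible arrangements. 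Substituting $N$ and simplifying via $\binom{n}{r}\tfrac{r}{n}=\binom{n-1}{r-1}$ and $2^{n}/2^{\,n-r}=2^{r}$ yields exactly $|\mathcal{F}|\le 2^{r-1}\binom{n-1}{r-1}$, valid on the whole range $r\le n$; no separate hypothesis $r\le n/2$ is needed precisely because the antipodal cycle has length $2n$.

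For the equality case I would observe that when $|\mathcal{F}|=2^{r-1}\binom{n-1}{r-1}$ the displayed inequality is tight term by term, so every admissible arrangement contributes exactly $r$ arcs of $\mathcal{F}$. When $r<n$ we have $r<(2n)/2$ strictly, so the equality case of Katona's lemma forces these $r$ arcs, for each arrangement $\rho$, to be a pencil: all arcs through one common position, i.e. through one common vertex $x_\rho\in V(M_n)$. It then suffices to show the centers are globally consistent, giving a single vertex $x$ with $\mathcal{F}\subseteq\mathscr{I}^r_x(M_n)$; equality of cardinalities upgrades this to $\mathcal{F}=\mathscr{I}^r_x(M_n)$.

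I expect the propagation of the center to be the main obstacle. Admissible arrangements are connected under elementary moves — transposing the edges assigned to two position-pairs, and flipping the orientation of a single edge — and the strategy is to show that across each move enough arcs are common to both arrangements (and lie in $\mathcal{F}$) that the two pencils must share an apex; iterating along a connecting sequence of moves then forces all $x_\rho$ to coincide. Making this connectivity-and-invariance argument fully rigorous, in particular controlling how a move disturbs the arcs through the current apex and handling the moves that straddle the arc boundary, is the delicate part. The cardinality bound itself, by contrast, is immediate once the antipodal setup and Katona's lemma are in place.
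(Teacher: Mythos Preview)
Your bound argument is essentially identical to the paper's: in proving Theorem~\ref{matching} the paper introduces ``good'' cyclic orderings (your admissible arrangements), counts $2^{n-1}(n-1)!$ of them up to rotation, observes that each independent $r$-set is an interval in $r!(n-r)!2^{n-r}$ of them, and applies Lemma~\ref{lem1} (which for $k=2$ is Katona's circle lemma) to bound the number of $\mathcal{F}$-intervals per ordering by $r$. Your version differs only by the harmless factor $2n$ from not quotienting by rotation.

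For the equality characterization when $r<n$, the paper does \emph{not} give a proof: it explicitly defers to \cite{bolead}, writing ``because Theorem~\ref{bl} suffices when $r<n$, we henceforth assume $n\le r$''. Your proposed strategy---propagating the pencil center across edge-transpositions and orientation-flips connecting admissible arrangements---is exactly what the paper carries out in Lemmas~\ref{l1} and~\ref{l2} for the range $n\le r<(k-1)(2n)/k$. As you yourself acknowledge, making this rigorous (tracking which arcs change under a move and ruling out a shift of the block of unassigned indices) is the real work, and you have not done it; so the uniqueness half of your proposal remains a sketch with a genuine gap. The outline is sound and mirrors the paper's method for the complementary parameter range, but note that Lemmas~\ref{l1} and~\ref{l2} are stated only for $r\ge n$ and $k\ge 3$, so the case analysis for $r<n$, $k=2$ would still need to be worked out from scratch.
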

Note that if $r<n$, then $\mathscr{P}^r(M_n)=\mathscr{I}^r(M_n)$ and $\mathscr{M}^r(M_n)=\emptyset$. Similarly if $r>n$, $\mathscr{P}^r(M_n)=\mathscr{M}^r(M_n)$ and $\mathscr{I}^r(M_n)=\emptyset$. In the case $r=n$, we have $\mathscr{P}^r(M_n)=\mathscr{I}^r(M_n)=\mathscr{M}^r(M_n)$. The main interest of our theorem is in the case $r>n$ for the bound, and $r\geq n$ for the characterization of the extremal structures. For the other cases, Theorem \ref{bl} suffices.
\section{Proof of main theorem}
The technique we use to prove Theorem \ref{matching} is a generalization of Katona's \textit{circle} method, first employed by Frankl to give a proof of Theorem \ref{frankwise}. In particular, we use the strategies from \cite{kamatwise} for characterizing the extremal structures.

We first present two general lemmas about \textit{cyclic orders} on any $n$-element set. The first of these lemmas is due to Frankl \cite{fr}, while the second one is due to the author \cite{kamatwise}. The proofs of both these lemmas also appear in \cite{kamatwise}, but as we will build on these ideas in the rest of the proof, we reproduce them here. We introduce some notation first.

Consider a permutation $\sigma\in S_n$ as a sequence $(\sigma(1),\ldots,\sigma(n))$. We say that two permutations $\mu$ and $\pi$ are \textit{equivalent} if there is some $i\in [n]$ such that $\pi(x)=\mu(x+i)$ for all $x\in [n]$.\footnote{Addition is carried out mod $n$, so $x+i$ is either $x+i$ or $x+i-n$, depending on which lies in $[n]$.} Let $P_n$ be the set of equivalence classes, called \textit{cyclic} orders on $[n]$. For a cyclic order $\sigma$ and some $x\in [n]$, call the set $\{\sigma(x),\ldots,\sigma(x+r-1)\}$ a $\sigma$-interval of length $r$ that \textit{begins} at $x$, \textit{ends} in $x+r-1$, and \textit{contains} the indices $\{x, x+1, \ldots, x+r-1\}$ (addition again mod $n$).
\begin{lemma}[Frankl]\label{lem1}
Let $\sigma\in P_{n}$ be a cyclic order on $[n]$, and $\mathcal{F}$ be a $k$-wise intersecting family of $\sigma$-intervals of length $r\leq (k-1)n/k$. Then, $|\mathcal{F}|\leq r$.
\end{lemma}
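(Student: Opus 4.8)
The plan is to argue by contrapositive. Suppose $|\mathcal F|\ge r+1$; I will exhibit $F_1,\dots,F_k\in\mathcal F$ with $\bigcap_{i=1}^{k}F_i=\emptyset$, contradicting $k$-wise intersection. Identify each $\sigma$-interval with the set of consecutive indices it contains, so that $\mathcal F$ becomes a family of length-$r$ intervals of $[n]$, all index arithmetic being mod $n$. Put $\ell:=n-r$ and note two facts: the hypothesis $r\le(k-1)n/k$ is equivalent to $k\ell\ge n$; and for length-$r$ intervals, $\bigcap_{i=1}^{k}F_i=\emptyset$ holds precisely when the complements $\overline{F_i}:=[n]\setminus F_i$ --- which are $\sigma$-intervals of length $\ell$ --- cover $[n]$. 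So it suffices to find $k$ members of $\mathcal G:=\{\overline F:F\in\mathcal F\}$ that cover $[n]$.

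Next I would read off the structure of $\mathcal G$. A length-$\ell$ interval is determined by its first index, so $\mathcal G$ is recorded by a set $B\subseteq[n]$ of starting indices with $|B|=|\mathcal F|\ge r+1$, whence $|[n]\setminus B|\le\ell-1$. Therefore every cyclic gap between consecutive elements of $B$ has length at most $\ell$ --- a longer gap would contain $\ell$ consecutive non-elements of $B$ --- so in fact $\mathcal G$ already covers $[n]$, consecutive intervals overlapping or abutting. The only remaining task is to thin $\mathcal G$ to $k$ intervals that still cover $[n]$, using just $k\ell\ge n$ and $|[n]\setminus B|\le\ell-1$; equivalently, to find $k$ elements of $B$ whose cyclic gaps are all at most $\ell$.

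When $k\ell=n$ this is immediate: partition $[n]$ cyclically into the $\ell$ progressions $P_a=\{a,a+\ell,\dots,a+(k-1)\ell\}$, $a=0,\dots,\ell-1$; since $|[n]\setminus B|<\ell$, some $P_a$ lies entirely in $B$, and the $k$ members of $\mathcal G$ starting along $P_a$ have all cyclic gaps equal to $\ell$, hence partition (in particular cover) $[n]$. For general $k\ell\ge n$, set $m:=\lceil n/\ell\rceil\ (\le k)$, fix a gap pattern $g_0,\dots,g_{m-1}\in\{1,\dots,\ell\}$ with $\sum_i g_i=n$ (possible since $m\le n\le m\ell$), put $c_j=g_0+\dots+g_{j-1}$, and examine the cyclic translates $S_t=\{t+c_0,\dots,t+c_{m-1}\}$, $t\in[n]$: each $S_t$ has all cyclic gaps at most $\ell$, and $S_t$ fails to lie in $B$ only when one of its $m$ points is a non-element of $B$, so at most $m\,|[n]\setminus B|\le m(\ell-1)$ of the $n$ translates fail. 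A surviving translate yields $m\le k$ elements of $B$ with all gaps $\le\ell$, hence $m$ covering members of $\mathcal G$, hence $\le k$ members of $\mathcal F$ with empty intersection (pad with repetitions if $m<k$). I expect the real work to be precisely this final count: $m(\ell-1)$ can exceed $n$ once $\ell$ is large relative to $n$, and there one exploits periodicity of the gap pattern (as in the case $k\ell=n$), or disposes of the small residual ranges by hand --- for instance the classical circle bound for $2$-wise intersecting families of $r$-intervals already gives $|\mathcal F|\le r$ whenever $r\le n/2$. So the core of the proof is a short case analysis on how $n$, $\ell$ and $k$ compare, rather than a single uniform estimate; either way it produces the contradiction, hence $|\mathcal F|\le r$.
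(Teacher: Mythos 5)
Your reduction is sound and is essentially the paper's starting point: pass to the complements, which are $\sigma$-intervals of length $\ell=n-r$, note that $k$-wise intersection of $\mathcal{F}$ is equivalent to no $k$ complements covering $[n]$, and observe that $|\mathcal{F}|\geq r+1$ forces the set $B$ of starting indices of complements to miss at most $\ell-1$ elements of $[n]$. The genuine gap is in the step you yourself flag as ``the real work'': exhibiting at most $k$ elements of $B$ whose cyclic gaps are all at most $\ell$. Your translate-counting bound only succeeds when $m(\ell-1)<n$ with $m=\lceil n/\ell\rceil$, and this fails on a real range of admissible parameters: writing $n=m\ell-s$ with $0\leq s\leq \ell-1$, one gets $m(\ell-1)=n+s-m\geq n$ whenever $s\geq m$. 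For instance $n=12$, $r=7$, $k=3$ (so $\ell=5$, $m=3$, $m(\ell-1)=12$) satisfies $r\leq (k-1)n/k=8$, is not covered by your Katona-type fallback since $r>n/2$, and the ``periodic pattern'' fallback also breaks there: with the pattern $(4,4,4)$ and $B^c=\{1,2,3,4\}$ every one of the four distinct translates is killed, even though a valid triple exists (e.g.\ $\{5,10,12\}$, gaps $5,2,5$). So the selection statement you need is true, but your argument does not establish it in those cases; the deferred ``short case analysis'' is precisely the nontrivial content of Frankl's lemma, not a routine residue.

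For contrast, the paper avoids any case split by a different count on the complements: fix one complement $G$ ending at index $n$, assign to every other complement the single index at which it ends, assign to $G$ all indices of $[n,k(n-r)]$, and observe that no residue class of $[k(n-r)]$ modulo $n-r$ can be fully assigned, since the $k$ intervals ending at the indices of such a class would cover $[n]$. Hence at least $n-r$ indices are unassigned, and counting assigned indices gives $(|\mathcal{F}|-1)+(k(n-r)-n+1)+(n-r)\leq k(n-r)$, i.e.\ $|\mathcal{F}|\leq r$, uniformly in $n,\ell,k$. To repair your write-up you would either need an argument of this kind, or a correct selection argument for the covering step (note that a plain greedy choice also needs care: in the tight case $k\ell=n$ it must jump exactly $\ell$ at every step, which forces an averaging over starting progressions rather than an arbitrary start).
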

\begin{proof}
 Let $\mathcal{F}^c=\{[n]\setminus F:F\in \mathcal{F}\}$. Let $|\mathcal{F}|=|\mathcal{F}^c|=m$. We will prove that $m\leq r$. Since $r\leq (k-1)n/k$, we have $n\leq k(n-r)$. Suppose $G_1, \ldots, G_k\in \mathcal{F}^c$. Clearly $\cup_{i=1}^k G_i\neq [n]$; otherwise $\cap_{i=1}^k ([n]\setminus G_i)=\emptyset$, a contradiction. Let $G\in \mathcal{F}^c$. Without loss of generality, suppose $G$ ends in $n$. We now assign indices from $[1,k(n-r)]$ to sets in $\mathcal{F}^c$. For every set $G'\in \mathcal{F}^c\setminus \{G\}$, assign the index $x$ to $G'$ if $G'$ ends in $x$. Assign all indices in $[n,k(n-r)]$ to $G$. Consider the set of indices $[k(n-r)]$ and partition them into equivalence classes mod $n-r$. Suppose there is an equivalence class such that all $k$ indices in that class are assigned. Let $\{H_i\}_{i\in [k]}$ be the $k$ sets in $\mathcal{F}^c$ which end in the $k$ indices in this equivalence class. It is easy to note that $\cup_{i=1}^k H_i=[n]$, which is a contradiction. So for every equivalence class, there exists an index which has not been assigned to any set in $\mathcal{F}^c$. This implies that there are at least $n-r$ indices in $[k(n-r)]$ which are unassigned. Each set in $\mathcal{F}^c\setminus \{G\}$ has one index assigned to it, and $G$ has $k(n-r)-n+1$ indices assigned to it. This gives us $m-1+k(n-r)-n+1+n-r\leq k(n-r)$, which simplifies to $m\leq r$, completing the proof.
\renewcommand{\qedsymbol}{$\diamond$}
\end{proof}
We will now characterize the case when $|\mathcal{F}|=r$, in the following lemma.
\begin{lemma}\label{lem2}
Let $\sigma\in P_{n}$ be a cyclic order on $[n]$, and let $\mathcal{F}$ be a $k$-wise intersecting family of $\sigma$-intervals of length $r< (k-1)n/k$. If $|\mathcal{F}|= r$, then $\mathcal{F}$ consists of all intervals which contain an index $x$.
\end{lemma}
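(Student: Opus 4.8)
The plan is to refine the counting argument from the proof of Lemma~\ref{lem1} and track exactly when the inequality $m \le r$ is tight. Recall that in that proof we worked with the complement family $\mathcal{F}^c$, fixed a set $G \in \mathcal{F}^c$ ending in $n$, assigned to every $G' \in \mathcal{F}^c \setminus \{G\}$ the index where it ends, gave $G$ all indices in $[n, k(n-r)]$, and then partitioned $[k(n-r)]$ into the $n-r$ residue classes mod $n-r$. The key point was that no residue class can have all $k$ of its indices assigned, forcing at least $n-r$ unassigned indices, which yielded $m \le r$. So if $|\mathcal{F}| = r$, equality holds throughout: \emph{every} residue class mod $n-r$ has exactly one unassigned index, every index in $[1, n-1]$ is assigned to a (necessarily unique) set of $\mathcal{F}^c \setminus \{G\}$, and $G$ is assigned exactly the block $[n, k(n-r)]$; moreover since $r < (k-1)n/k$ we have $n < k(n-r)$, so this block is nontrivial and $G$ genuinely gets indices beyond $n$.

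Next I would translate the assignment data back into structural information about $\mathcal{F}$ itself. Since each of the indices $1, \ldots, n-1$ is the endpoint of exactly one member of $\mathcal{F}^c \setminus \{G\}$, and $G$ ends at $n$, the family $\mathcal{F}^c$ consists of exactly $r$ intervals of length $n-r$ whose right endpoints are $r$ of the $n$ positions, with the forbidden positions being exactly one representative from each residue class mod $n-r$ — equivalently, the $n-r$ \emph{unassigned} endpoints form a complete residue system mod $n-r$. Dualizing, $\mathcal{F}$ is a family of $r$ intervals of length $r$; I want to show these are precisely the $r$ intervals containing some fixed index $x$. The cleanest route is to use the $k$-wise (indeed already the pairwise) intersecting property directly on $\mathcal{F}$ together with the count: an intersecting family of $\sigma$-intervals of length $r$ in which the left endpoints are "spread out" cannot be large, and hitting the bound $r$ forces the $r$ intervals to be a "consecutive fan" $\{\sigma(x-r+1),\dots,\sigma(x)\}, \dots, \{\sigma(x),\dots,\sigma(x+r-1)\}$, all sharing the index $x$.

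To make that last step precise I would argue as follows. Order the members of $\mathcal{F}$ by left endpoint; since $|\mathcal{F}| = r$ and all have length $r$, if the left endpoints were not $r$ consecutive indices $x-r+1, \ldots, x$ (cyclically) then two of the intervals would be "antipodal enough" to be disjoint — here one uses that two $\sigma$-intervals of length $r$ are disjoint exactly when their left endpoints differ (cyclically) by at least $r$ in both directions, which given $2r < n$ (a consequence of $r < (k-1)n/k \le n/2$ only when $k=2$; in general $2r \le n$ may fail, so I must instead feed any such pair into the $k$-wise condition). The honest obstacle, and the step I expect to spend the most care on, is exactly this: for general $k$ we do not have $2r \le n$, so pairwise disjointness is not the right contradiction; instead I will show that if the $r$ left endpoints are not consecutive, one can select $k$ of the intervals (with multiplicity, reusing intervals as allowed in a $k$-wise condition) whose left endpoints, read cyclically, "wrap around" past every index, i.e. whose complements cover $[n]$ — which is precisely the configuration ruled out in Lemma~\ref{lem1}'s proof via a fully-assigned residue class. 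Pinning down that selection — essentially a pigeonhole/covering statement saying $r$ intervals of length $r$ with non-consecutive left endpoints admit $k$ with union $[n]$ whenever $r < (k-1)n/k$ — is the crux, and once it is established the conclusion that $\mathcal{F}$ is exactly the star of intervals through $x$ is immediate from $|\mathcal{F}| = r$ and the fact that there are exactly $r$ intervals of length $r$ containing any given index.
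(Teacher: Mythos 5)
Your setup is fine as far as it goes: extracting from the equality case of Lemma~\ref{lem1} that there are exactly $n-r$ unassigned indices, one from each residue class mod $n-r$, is exactly how the paper begins, and your overall strategy (if the family is not a star, exhibit at most $k$ members of $\mathcal{F}^c$ covering $[n]$, contradicting the $k$-wise condition) is also the paper's strategy. (Minor slip along the way: it is not true that \emph{every} index in $[1,n-1]$ is assigned --- exactly $r-1$ of them are, and all $n-r$ unassigned indices lie in $[1,n-1]$; what equality gives is one unassigned index per residue class.)

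The problem is that the step you defer --- ``$r$ intervals of length $r$ with non-consecutive left endpoints admit $k$ (with repetition) whose complements cover $[n]$'' --- is not a technical detail to be pinned down later; it is precisely the contrapositive of Lemma~\ref{lem2} itself, so your proposal reduces the lemma to an equivalent restatement and stops at the point where the actual work begins. The paper's proof supplies exactly this missing content, and it does so by exploiting the residue-class structure you recorded but then did not use: taking $x$ to be the smallest unassigned index, one shows by induction on $i$ that $x+i$ is unassigned for $0\le i\le n-r-1$ (so the unassigned indices form a consecutive block, which is equivalent to $\mathcal{F}$ being a star). The inductive step is where the covering family is built explicitly: if $y=x+i$ were assigned to some $Y\in\mathcal{F}^c$ while $y-1$ is unassigned, then every other index in the residue class $E_{y-1}$ of $y-1$ is assigned (one unassigned index per class), and the intervals ending at the indices of $I_1=E_{y-1}\cap(y-1,n]$ together with those ending at $I_2'=\{j+1: j\in E_{y-1}\cap[1,y-1)\}$ (assigned because they are below $x$) give at most $k-1$ sets which, together with $Y$, cover $[n]$ --- using $n<k(n-r)$, i.e.\ $r<(k-1)n/k$, to see the wrap-around index $p+1$ is covered. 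Without this (or some substitute) selection argument, the proposal has a genuine gap at its central claim.
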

\begin{proof}
Without loss of generality, let $\sigma$ be the cyclic order given by the identity permutation and let $\mathcal{F}$ be a $k$-wise intersecting family of $\sigma$-intervals (henceforth, we drop the $\sigma$). As in the proof of Lemma \ref{lem1}, we consider $\mathcal{F}^c$ and assume (without loss of generality) that $F=\{r+1,r+2,\ldots,n\}\in \mathcal{F}^c$. It is clear from the proof of Lemma \ref{lem1} that if $|\mathcal{F}|=|\mathcal{F}^c|=r$, then there are exactly $n-r$ indices in $[k(n-r)]$, one from each equivalence class (modulo $n-r$), which are not assigned to any set in $\mathcal{F}^c$. In other words, no interval in $\mathcal{F}^c$ ends in any of these $n-r$ indices. Since $F$ ends in $n$, all indices in $[n,k(n-r)]$ (and there will be at least $2$, since $r<(k-1)n/k$) will be assigned. It will be sufficient to show that the set of unassigned indices is $[x,x+n-r-1]$ for some $x\in [r]$. This would mean that no interval in $\mathcal{F}^c$ ends in any of the indices from $[x,x+n-r-1]$ and also that for every index $i\in [1,x-1]\cup [x+n-r,n]$, the interval ending in $i$ is a member of $\mathcal{F}^c$. This would imply that for every $i\in [n]$, there is an interval in $\mathcal{F}$ that \textit{begins} at index $i$ if and only if $i\in [1,x]\cup [x+n-r+1,n]$. This would mean that every interval in $\mathcal{F}$ contains $x$, as required.

Let $x$ be the smallest unassigned index in $[n-1]$. We will show that $[x,x+n-r-1]$ is the required set containing all the $n-r$ unassigned indices. Clearly $x\leq r$. Let $x\equiv j\textrm{ mod } n-r$. We will show that $x+i$ is unassigned for each $0\leq i\leq n-r-1$. We argue by induction on $i$, with the base case being $i=0$. Let $y=x+i$ for some $1\leq i\leq n-r-1$. Suppose $y$ is assigned, i.e. suppose there is a set $Y$ in $\mathcal{F}^c$ that ends in the index $y$. By the induction hypothesis, $y-1$ is unassigned. Let $E_{y-1}$ be the equivalence class containing $y-1$; since $n<k(n-r)$, we have $|E_{y-1}|\leq k$. As mentioned earlier, since $|\mathcal{F}^c|=r$, there are $n-r$ unassigned indices, exactly one from each equivalence class modulo $n-r$. In conjunction with the induction hypothesis, this means that every index in $E_{y-1}\setminus \{y-1\}$ is assigned to some interval in $\mathcal{F}^c$.

Let $I_1=E_{y-1}\cap (y-1,n]$. By the previous observation, each index in $I_1$ is assigned. Similarly, let $I_2=E_{y-1}\cap [1,y-1)$. Let $I_2'=\{j+1:j\in I_2\}$. $I_2'$ contains indices in the same equivalence class as $y$, and are assigned. This is true because all indices in $I_2'$ are smaller than $x$ and $x$ is the smallest unassigned index.\footnote{This is not true when $i>n-r-1$ and thus makes the induction ``stop'' at $i=n-r-1$.} Clearly, $E_{y-1}=I_1\cup I_2\cup \{y-1\}$ and consequently, $|E_{y-1}|=|I_1|+|I_2|+1$, giving $|I_1|+|I_2'|=|I_1|+|I_2|=|E_{y-1}|-1\leq k-1$. Let $J=I_1\cup I_2'$, so $|J|\leq k-1$ and all indices in $J$ are assigned. So let $\mathcal{H}$ be the subfamily of intervals in $\mathcal{F}^c$ which end in indices from $J$; we have $|\mathcal{H}|\leq k-1$ and hence the family $\mathcal{G}=\mathcal{H}\cup \{Y\}$ has at most $k$ sets. We will show that $\bigcup_{G\in \mathcal{G}}G=[n]$.

Let $p$ be the largest index in $I_1$ and let $q$ be the smallest index in $I_2'$. Now $q$ lies in the same equivalence class as $y$ and $p$ lies in the same equivalence class as $y-1$. If $n=k(n-r)$, it is easy to see that the set which ends in $q$ begins at the largest index from the same equivalence class as $y+1$, in other words, $p+2$. However, we have $n<k(n-r)$, so the set which ends in $q$ must contain $p+1$. This proves that the union of all sets in $\mathcal{G}$ is $[n]$, which is a contradiction. Thus $y$ is unassigned.
\renewcommand{\qedsymbol}{$\diamond$}
\end{proof}
We now return to the graph $M_n$. Let $V(M_n)=[2n]$, and $E(M_n)=\{\{1,n+1\},\{2,n+2\},\ldots,\{n,2n\}\}$. Call two vertices which share an edge as \textit{partners}. Similar to the proof of Theorem \ref{bl} by Bollob\'as and Leader, we only consider cyclic orderings of the set $V(M_n)$ with certain additional properties. In particular, call a cyclic ordering of $V(M_n)$ \textit{good} if all partners are exactly $n$ apart in the cyclic order. More formally, if $c$ is a bijection from $V(M_n)$ to $[2n]$, $c$ is a good cyclic ordering if for any $i\in [n]$, $c(i+n)=c(i)+n$ (modulo $2n$, so if $c(i)>n$, we require $c(i+n)=c(i)-n$). It is fairly simple to note that the total number of good cyclic orderings, regarding cyclically equivalent orderings as identical, is $2^{n-1}(n-1)!$. Every interval in a good cyclic ordering will be either an independent set in $M_n$ (if $r\leq n$) or contain a maximum independent set (if $r>n$). Now let $\mathcal{F}\subseteq \mathscr{P}^r(M_n)$ be $k$-wise intersecting for $r\leq \frac{(k-1)(2n)}{k}$. Using Lemma \ref{lem1}, we can conclude that for any good cyclic ordering $c$, there can be at most $r$ sets in $\mathcal{F}$ that are intervals in $c$. For a given set $F\in \mathcal{F}$, in how many good cyclic orderings is it an interval? The answer depends on the value of $r$. Suppose $r\leq n$. In this case, $F$ is an interval in $r!(n-r)!2^{n-r}$ good cyclic orderings. Thus we have
$|\mathcal{F}|r!(n-r)!2^{n-r}\leq r(n-1)!2^{n-1}$, giving $|\mathcal{F}|\leq {n-1 \choose r-1}2^{r-1}$. Note that this bound also follows directly from Theorem \ref{bl}, since $r\leq n$ implies that $\mathscr{P}^r(M_n)=\mathscr{I}^r(M_n)$. Now suppose $r>n$. Then $\mathscr{I}^{r}(M_n)=\emptyset$ and $\mathscr{P}^r(M_n)=\mathscr{M}^r(M_n)$. We can think of each set in $\mathcal{F}$ as containing some set of $r-n$ edges, i.e. both vertices from each of the $r-n$ edges, and exactly $1$ vertex each from the remaining $2n-r$ edges. Hence the number of good cyclic orders in which a set $F\in \mathcal{F}$ is an interval is $(2n-r)!(r-n)!2^{r-n}$. This gives us the following inequality.
\begin{eqnarray*}
|\mathcal{F}| &\leq &\frac{r(n-1)!2^{n-1}}{(2n-r)!(r-n)!2^{r-n}} \\
&=& \frac{n(n-1)!2^{2n-r-1}}{(2n-r)!(r-n)!}+\frac{(r-n)(n-1)!2^{2n-r-1}}{(2n-r)!(r-n)!} \\
&=& {n \choose 2n-r}2^{2n-r-1}+{n-1 \choose 2n-r}2^{2n-r-1} \\
&=& {n-1 \choose 2n-r}2^{2n-r-1}+{n-1 \choose 2n-r-1}2^{2n-r-1} +{n-1 \choose 2n-r}2^{2n-r-1}\\
&=& 2^{2n-r}{n-1 \choose 2n-r}+2^{2n-r-1}{n-1 \choose 2n-r-1}.
\end{eqnarray*}
This completes the proof of the bound. We will now prove that the extremal families are essentially unique. To simplify the argument, and because Theorem \ref{bl} suffices when $r<n$, we henceforth assume $n\leq r<\frac{(k-1)(2n)}{k}$, which implies $k\geq 3$ and $2n-r\leq n$. Suppose that $|\mathcal{F}|=2^{2n-r}{n-1 \choose 2n-r}+2^{2n-r-1}{n-1 \choose 2n-r-1}$. Then for any good cyclic ordering $c$, there are exactly $r$ sets from $\mathcal{F}$ that are intervals in $c$. We say that $c$ is \textit{saturated} (with respect to $\mathcal{F}$) if it has this property. Using Lemma \ref{lem2}, we can then conclude that every set in $\mathcal{F}$ that is an interval in $c$ contains a common index $x$. Call $c$ $x$-saturated to identify this common index.

Consider the good cyclic ordering $\pi$ defined by $\pi(i)=i$ for $1\leq i\leq 2n$ and assume without loss of generality that it is $2n$-saturated. Since the number of good cyclic orderings is $2^{n-1}(n-1)!$, we will identify all good cyclic orderings with bijections $\sigma$ from $[2n]$ to itself that satisfy $\sigma(n)=n$ and $\sigma(2n)=2n$.

For each permutation $p\in S_{n-1}$, define the following good cyclic ordering $\sigma$ on $[2n]$: for $1\leq i\leq n-1$, let $\sigma(i)=p(i)$ and for $n+1\leq i\leq 2n-1$, let $\sigma(i)=p(i-n)+n$. Also let $\sigma(i)=i$ if $i\in \{n,2n\}$. Denote the set of good cyclic orders obtained from permutations in $S_{n-1}$ in this manner by $C_{n-1}$. Now for $1\leq i\leq n-2$, define an adjacent transposition $T_i$ for any good cyclic ordering $\sigma$ as an operation that swaps the elements in positions $i$ and $i+1$ and also the elements in positions $i+n$ and $i+n+1$ of $\sigma$, so the resulting cyclic ordering, say $\mu$, is also a good cyclic ordering. Note also that if $\sigma\in C_{n-1}$, then $\mu\in C_{n-1}$. We now prove the following lemma.

\begin{lemma}\label{l1}
For a $k$-wise intersecting family $\mathcal{F}\subseteq \mathscr{P}^r(M_n)$, with $n\leq r<\frac{(k-1)(2n)}{k}$, let $\sigma$ be a $2n$-saturated good cyclic ordering.  Let $\mu$ be the good cyclic order obtained from $\sigma$ by an adjacent transposition $T_i$, $i\in [n-2]$. If $\mu$ is saturated, then it is $2n$-saturated.
\end{lemma}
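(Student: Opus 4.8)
The plan is to pass to complements and, assuming the lemma fails, to exhibit at most $k$ members of $\mathcal F$ with empty common intersection. Write $\ell=2n-r$; since $2n=r+\ell$, the hypothesis $r<\frac{(k-1)(2n)}{k}$ is equivalent to $r<(k-1)\ell$, and hence (by integrality) to $r+1\le(k-1)\ell$. Put $\mathcal F^c=\{V(M_n)\setminus F:F\in\mathcal F\}$; then $\mathcal F$ is $k$-wise intersecting if and only if no $k$ members of $\mathcal F^c$ have union $V(M_n)$, which by monotonicity holds if and only if no family of at most $k$ members of $\mathcal F^c$ covers $V(M_n)$. So it suffices to produce such a covering family.

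First, by Lemma \ref{lem2} applied to the family of $\mu$-intervals in $\mathcal F$ (a $k$-wise intersecting family of size $r$, since $\mu$ is saturated), there is a position $z$ with $\mu$ being $z$-saturated; we must prove $z=2n$, so suppose $z\ne 2n$. Since $\sigma$ is $2n$-saturated, every $\sigma$-interval whose range contains position $2n$ lies in $\mathcal F$, so $\mathcal F^c$ contains every $\sigma$-arc of length $\ell$ whose range avoids position $2n$; there are $r$ of these, and their vertex sets together cover $V(M_n)\setminus\{\sigma(2n)\}=V(M_n)\setminus\{2n\}$. On the other hand, since $z\ne 2n$ the length-$r$ arc of $\mu$-positions through $z$ is not contained in that through $2n$, so some $\mu$-interval $W$ has range containing $z$ but not $2n$; then $W\in\mathcal F$ (as $\mu$ is $z$-saturated), so $\overline W:=V(M_n)\setminus W\in\mathcal F^c$, and $\overline W$ contains the vertex $2n=\mu(2n)$. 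It therefore remains only to cover the $r$ vertices of $W$ using at most $k-1$ of the $\sigma$-arcs of length $\ell$ avoiding position $2n$; combined with $\overline W$ this yields at most $k$ members of $\mathcal F^c$ covering $V(M_n)$, the required contradiction.

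To carry this out, let $\tau=(i,i+1)(i+n,i+n+1)$ be the transposition of positions underlying $T_i$, so that $\mu(p)=\sigma(\tau(p))$; as $1\le i\le n-2$, neither $n$ nor $2n$ is moved by $\tau$. If $R_W$ is the arc of $\mu$-positions defining $W$, then the vertices of $W$ occupy, in the ordering $\sigma$, exactly the positions $\tau(R_W)\subseteq[1,2n-1]$ (using $2n\notin R_W$). The key geometric claim is that $\tau(R_W)$ is contained in a single $\sigma$-arc $A\subseteq[1,2n-1]$ with $|A|\le r+1$ when $r>n$, and $|A|\le n+2$ when $r=n$: indeed $\tau$ moves at most two elements of $R_W$, each only to an adjacent position, so $A$ exceeds $R_W$ by at most one position at each end, and one checks that $R_W$ can straddle both of the pairs $\{i,i+1\}$ and $\{i+n,i+n+1\}$ only in the case $r=n$. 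Finally, any arc of length $L$ inside the $2n-1$ positions different from $2n$ is covered by $\lceil L/\ell\rceil$ of the length-$\ell$ sub-arcs avoiding position $2n$ (greedily from one end, sliding the last piece inward if it would overrun). When $r>n$ this number is at most $\lceil(r+1)/\ell\rceil\le k-1$ by $r+1\le(k-1)\ell$; when $r=n$ we have $\ell=n$ and $\lceil(n+2)/n\rceil=2\le k-1$, since $r=n<\frac{(k-1)(2n)}{k}$ forces $k\ge 3$. This covers $W$ by at most $k-1$ members of $\mathcal F^c$, giving the contradiction; hence $z=2n$.

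The step I expect to be the main obstacle is the geometric claim controlling $\tau(R_W)$: one must determine precisely when and how the arc $R_W$ can straddle the two transposition-pairs of $\tau$, and — crucially using that $W$ avoids the vertex $2n$ — verify that the enclosing arc $A$ really does stay among the $2n-1$ positions different from $2n$, so that the length-$\ell$ sub-arcs guaranteed to lie in $\mathcal F^c$ actually suffice to cover it. A secondary, routine point is checking that $\lceil L/\ell\rceil$ such sub-arcs can always be packed inside those positions.
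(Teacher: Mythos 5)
Your proof is correct, but it takes a genuinely different route from the paper's. The paper stays inside the ``assigned/unassigned end-index'' bookkeeping set up in the proofs of Lemmas \ref{lem1} and \ref{lem2}: it observes that only the four intervals touched by $T_i$ (those ending at $i$, $i+n$ and beginning at $i+1$, $i+n+1$) can change status, and then runs a three-case analysis on the location of $i$ relative to $2n-r-1$ to show the block of unassigned indices is unchanged; the awkward sub-cases force $3n=2r$ and are eliminated by explicit five-set covers, with a couple of degenerate small cases relegated to footnoted ad hoc arguments. You instead argue by contradiction directly from the saturation characterization: if $\mu$ were $z$-saturated with $z\neq 2n$, you take one complement $\overline W$ of a $\mu$-interval through $z$ whose position range avoids $2n$, note that $2n$-saturation of $\sigma$ puts every length-$\ell$ $\sigma$-arc avoiding position $2n$ into $\mathcal F^c$, and cover the $\sigma$-positions of $W$ (which, since $\tau=(i,i+1)(i+n,i+n+1)$ moves at most one endpoint of the position arc $R_W$ unless $r=n$, lie in an arc of length at most $r+1$, resp.\ $n+2$, inside $[1,2n-1]$) by at most $\lceil(r+1)/\ell\rceil\le k-1$ such arcs, using exactly the hypothesis $r+1\le(k-1)\ell$ (and $k\ge 3$ when $r=n$); this yields at most $k$ members of $\mathcal F^c$ covering $V(M_n)$, the desired contradiction. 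Your argument buys uniformity: it needs no case split on $i$, absorbs the $3n=2r$ scenario and the paper's footnoted small cases ($n=3$, $r=4$, etc.) automatically, and works verbatim for $r=n$; the paper's argument is more local, identifying precisely which indices can change and showing the unassigned block is literally preserved. Two small polish points: the sentence ``the length-$r$ arc of $\mu$-positions through $z$ is not contained in that through $2n$'' is loosely phrased (arcs through a position are not unique) --- what you need, and what is immediate since the length-$r$ arcs inside $[1,2n-1]$ sweep all of $[1,2n-1]$, is just the existence of one such arc containing $z$; and the double-straddle claim deserves the one-line verification that it forces the endpoints of $R_W$ to be $i+1$ and $i+n$, hence $r=n$, with the enclosing arc $[i,i+n+1]\subseteq[1,2n-1]$ because $i\le n-2$.
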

\begin{proof}
Without loss of generality, assume that $\sigma=(1,\ldots,2n)$ is $2n$-saturated, and let $\mu=(1,\ldots,i-1,i+1,i,i+2,\ldots,n,\ldots,i+n-1,i+n+1,i+n,i+n+2,\ldots,2n)$, obtained from $\sigma$ by the transposition $T_i$ for some $1\leq i\leq n-2$, be saturated. As in the proofs of Lemmas \ref{lem1} and \ref{lem2}, we consider the family of complements $\mathcal{F}^c$ and consider sets in this family which are intervals in the two cyclic orders. Note that $\mathcal{F}^c$ is a $(2n-r)$-uniform family. From Lemma \ref{lem2}, we know that the set of unassigned indices in $\sigma$ is $\{2n,1,\ldots,2n-r-1\}$. It will be sufficient to show that the set of unassigned indices in $\mu$ is also the same.

A key observation here is that out of the $2n$ intervals of length $2n-r$, there are only $4$ in which $\sigma$ and $\mu$ differ. The intervals which end in indices $i$ and $i+n$ and the indices which begin at indices $i+1$ and $i+n+1$. In other words, only $4$ indices, $i$, $i+n$, $i+2n-r$ and $i+3n-r$, can potentially change from assigned to unassigned, or vice-versa after the transposition $T_i$. Also, if $2n-r-1$ is unassigned but $2n-r$ is assigned in $\mu$, then by Lemma \ref{lem2}, $\mu$ has the same set of unassigned indices as $\sigma$. Similarly, if $2n$ is unassigned but $2n-1$ is assigned in $\mu$, then $\mu$ has the same set of unassigned indices as $\sigma$.

We now consider three cases, depending on the value of $i$.
\begin{enumerate}
\item Let $i\in [1,2n-r-1)$. In this case, the intervals which end in the indices $2n-r-1$ and $2n-r$ are the same in both $\sigma$ and $\mu$. This means that the index $2n-r-1$ is unassigned in $\mu$, while the index $2n-r$ is assigned in $\mu$. By the previous observation, $\mu$ and $\sigma$ have the same set of unassigned indices, as required.
\item Let $i=2n-r-1$. We know that the set $A=\{1,\ldots,2n-r\}\in \mathcal{F}^c$, as $2n-r$ is assigned in $\sigma$. This clearly implies that $2n-r$ is also assigned in $\mu$. So suppose $2n-r-1$ is also assigned in $\mu$. This implies that the index $2n-1$ is unassigned in $\mu$.\footnote{The case where index $2n$ is assigned is trivial. If $2n-r\geq 3$, then this contradicts Lemma \ref{lem2}, while the case $2n-r-1=1$ gives $n=3$, $r=4$ and $k\geq 4$, which can be settled by an easy ad-hoc argument.} As $i\leq n-2$, this is only possible if $i+3n-r=2n-1$, which gives $3n=2r$ (and consequently, $2n-r=n/2$). This means that $k\geq 5$. Now consider the following intervals in $\sigma$, all of which are sets in $\mathcal{F}^c$: $\{1,\ldots,n/2\}$, $\{n/2+1,\ldots,n\}$, $\{n+1,\ldots,3n/2\}$ and $\{3n/2,\ldots,2n-1\}$. Finally, consider the interval $\{2n,1,\ldots,2n-r-2,2n-r\}$ in $\mu$, which is also a set in $\mathcal{F}^c$ as we have assumed that the index $2n-r-1$ is assigned in $\mu$. The union of these $5$ sets is clearly $[2n]$, a contradiction.
\item Let $i\in (2n-r-1,n-1)$. In this case, the interval ending in index $2n-r-1$ is the same in both $\sigma$ and $\mu$, so $2n-r-1$ is still unassigned in $\mu$. So suppose $2n-r$ is unassigned in $\mu$. This implies that the index $2n$ is assigned in $\mu$. Now, this is only possible if $i=2n-r$ and $i+3n-r=2n$, again giving $3n=2r$, $2n-r=n/2$ and $k\geq 5$. Now consider the following four intervals in $\sigma$, each of length $n/2$, all of which are sets in $\mathcal{F}^c$: $\{1,\ldots,n/2\}$, $\{n/2+1,\ldots,n\}$, $\{n+1,\ldots,3n/2\}$ and $\{3n/2,\ldots,2n-1\}$. Finally, consider the interval in $\mu$ of length $n/2$, beginning at index $3n/2+1$ and ending in index $2n$. Since $2n$ is assigned in $\mu$, this interval is also a set in $\mathcal{F}^c$. Also, since $\mu(2n)=2n$, the union of all the five intervals is $[2n]$, a contradiction.
\end{enumerate}
\renewcommand{\qedsymbol}{$\diamond$}
\end{proof}
Now for $1\leq i\leq n-1$, define a \textit{swap} operation $W_i$ on a good cyclic ordering $\sigma$ as an operation that exchanges the elements in positions $i$ and $n+i$ of $\sigma$, so the resulting cyclic order is also good. We will now prove the following lemma about the swap operation.
\begin{lemma}\label{l2}
For a $k$-wise intersecting family $\mathcal{F}\subseteq \mathscr{P}^r(M_n)$ with $n<r<\dfrac{(k-1)(2n)}{k}$, let $\sigma$ be a $2n$-saturated good cyclic ordering.  Let $\mu$ be the good cyclic order obtained from $\sigma$ by the swap $W_{n-1}$. If $\mu$ is saturated, then it is $2n$-saturated.
\end{lemma}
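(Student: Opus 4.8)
The plan is to follow the template of the proof of Lemma \ref{l1}: pass to the complement family and track which ``end indices'' can change their assigned/unassigned status under the operation. First I would take $\sigma=(1,2,\dots ,2n)$ to be the identity and assume, without loss of generality, that $\sigma$ is $2n$-saturated, so that $\mu=W_{n-1}(\sigma)$ agrees with $\sigma$ except that $\mu(n-1)=2n-1$ and $\mu(2n-1)=n-1$. Working with the $(2n-r)$-uniform complement family $\mathcal{F}^{c}$, Lemma \ref{lem2} together with $2n$-saturation says that the set of indices that are \emph{not} the endpoint of a $\sigma$-interval lying in $\mathcal{F}^{c}$ is exactly $B_{\sigma}:=\{2n,1,\dots ,2n-r-1\}$; and since $\mu$ is saturated, the analogous set $B_{\mu}$ is again a block of $2n-r$ consecutive indices. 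The goal is to prove $B_{\mu}=B_{\sigma}$.

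The key point is that $W_{n-1}$ only alters the contents of positions $n-1$ and $2n-1$, which sit $n$ apart; since $2n-r<n$, no $(2n-r)$-interval can span both, so the only $(2n-r)$-intervals whose vertex set changes are those spanning exactly one of them. Hence only $4n-2r$ end indices can flip status; writing $S$ for this set, one checks $S=[n-1,3n-r-2]\cup\{2n-1,2n\}\cup[1,2n-r-2]$ and so $S^{c}=[2n-r-1,n-2]\cup[3n-r-1,2n-2]$. In the main range $r\ge n+2$, both $2n-r-1$ and $2n-r$ lie in $S^{c}$, the first unassigned and the second assigned in $\sigma$ and hence also in $\mu$; since $B_{\mu}$ is a block of $2n-r$ consecutive indices containing $2n-r-1$ but not $2n-r$, it must equal $B_{\sigma}$, and we are done.

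The case $r=n+1$ carries the work. Here $S^{c}=\{n-2,2n-2\}$, so for free we only obtain $n-2\in B_{\mu}$ and $2n-2\notin B_{\mu}$, which forces $B_{\mu}=[b_{0},b_{0}+n-2]$ for some $b_{0}\in\{2n\}\cup[1,n-2]$. Assume for contradiction that $b_{0}\in[1,n-2]$; then $\mu$ is $b_{0}$-saturated, so $\mathcal{F}^{c}$ contains all $n+1$ of the $(n-1)$-intervals of $\mu$ whose span avoids position $b_{0}$ (their starting positions fill the window $\{b_{0}+1,\dots ,b_{0}+n+1\}$), as well as the $\sigma$-interval $\{1,\dots ,n-1\}$. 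I would then exhibit three members of $\mathcal{F}^{c}$ with union $[2n]$: the set $\{1,\dots ,n-1\}$; the $\mu$-interval with span $[n+2,2n]$, which lies in $\mathcal{F}^{c}$ for every such $b_{0}$ and equals $\{n-1\}\cup[n+2,2n-2]\cup\{2n\}$; and the $\mu$-interval with span $[b',b'+n-2]$ where $b'=\max\{b_{0}+1,3\}\in[3,n-1]$, which covers positions $n-1,n,n+1$ and equals $([b',b'+n-2]\setminus\{n-1\})\cup\{2n-1\}$. The union of these three is $[2n]$, contradicting that $\mathcal{F}$, being $k$-wise intersecting with $k\ge 3$, is $3$-wise intersecting. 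The values of $n$ not covered by this argument --- $n=3$, where $k\ge 4$ and a four-set union of the same flavour works, and $n=2$, where the block has size $1$ and the claim is immediate --- are settled by an easy ad-hoc argument, exactly as in Lemma \ref{l1}.

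The step I expect to be the main obstacle is precisely this $r=n+1$ bookkeeping: one must choose the three (or four) sets so that their spans tile $[2n]$ while still fitting inside the index window imposed by $b_{0}$-saturation, and verify this uniformly in $b_{0}$, patching the degenerate small values of $n$ separately. Identifying $S$ and disposing of the $r\ge n+2$ case are routine once the complement picture is in place.
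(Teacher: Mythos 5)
Your proposal is correct and follows essentially the same route as the paper: pass to the $(2n-r)$-uniform complement family, note that only intervals meeting positions $n-1$ or $2n-1$ can change status, dispose of $r\ge n+2$ immediately, and in the case $r=n+1$ exhibit three sets of $\mathcal{F}^c$ (using $k\ge 3$) whose union is $[2n]$, deferring $n\le 3$ to an ad hoc argument exactly as the paper's footnote does. The only cosmetic difference is that your third covering interval depends on $b_0$ via $b'=\max\{b_0+1,3\}$, whereas the paper uses the fixed $\mu$-interval ending in index $2n-3$ (which also avoids position $b_0$ for every $b_0\in[1,n-2]$), together with the same other two sets.
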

\begin{proof}
As before, we assume without loss of generality that $\sigma=(1,\ldots,2n)$ is the $2n$-saturated cyclic order, so $\{2n,1,\ldots,2n-r-1\}$ is the set of all unassigned indices in $\sigma$. By the definition of the swap $W_{n-1}$, we have $\mu=(1,\ldots,n-2,2n-1,n,\ldots,2n-2,n-1,2n)$. We also observe that $n<r$ implies $k\geq 3$. We consider two cases.
\begin{enumerate}
\item Suppose $r=n+1$, so $2n-r=n-1$. This means that the interval ending in $2n-r-1$ is the same in both cyclic orders, so $2n-r-1=n-2$ is still unassigned in $\mu$. So suppose that $2n-r=n-1$ is also unassigned in $\mu$. Since $\mu$ is saturated, we can use Lemma \ref{lem2} to conclude that $2n$ is assigned in $\mu$. Let the set of unassigned indices in $\mu$ be $[i,i+n-2]$ for some $i\leq n-2$. It is clear then that the index $2n-3$ will be assigned in $\mu$. Consider the following two intervals in $\mu$, each of length $n-1$: $\{2n-1,n,\ldots,2n-3\}$ and $\{n+2,\ldots,2n-2,n-1,2n\}$. Also consider the interval $\{1,\ldots,n-2,n-1\}$ in $\sigma$. All $3$ sets lie in $\mathcal{F}^c$, and their union is $[2n]$, a contradiction.\footnote{Strictly speaking, this argument requires $n\geq 4$. However, the case $n\leq 3$ and $r=n+1$ gives $k\geq 4$, which can be settled by an easy ad hoc argument.}
\item Suppose $n-1>2n-r$. Now the intervals of length $2n-r$ ending in the indices in $[2n-r-1,n-1)$ (which has length at least $2$) are the same in both $\sigma$ and $\mu$. In other words, $2n-r-1$ is unassigned in $\mu$, while all the other indices in $[2n-r-1,n-1)$ are assigned. This means that the set of unassigned indices is the same in both $\sigma$ and $\mu$, as required.
\end{enumerate}
\renewcommand{\qedsymbol}{$\diamond$}
\end{proof}
We are now ready to finish the proof of Theorem \ref{matching}. We consider two cases, $r=n$ and $r>n$, since the proofs are slightly different. Suppose first that $r>n$. Since we have assumed that $\pi$ is $2n$-saturated, we can use Lemmas \ref{l1} and \ref{l2} to infer that every good cyclic ordering is $2n$-saturated. To finish the proof of this case, we will show that each set in $\mathscr{P}^r_{2n}(M_n)$ is an interval in some such good cyclic ordering. Let $A\in \mathscr{P}^r_{2n}(M_n)$. Then $A$ contains $r-n$ edges (i.e. both vertices from each of the $r-n$ edges) and $2n-r$ other vertices, one each from the other $2n-r$ edges. Suppose first that $n\in A$, so $A$ contains the edge $\{n,2n\}$. Let the other $r-n-1$ edges be $\{\{x_1,y_1\},\ldots,\{x_{r-n-1},y_{r-n-1}\}\}$, with each $x_i\in [n-1]$ and each $y_i\in [n+1,2n-1]$. Let $L=\{l_1,\ldots,l_{2n-r}\}$ be the set of the remaining $2n-r$ vertices in $A$. We now construct a good cyclic ordering $\sigma$ in which $A$ is an interval. To define $\sigma$, it clearly suffices to define values of $\sigma(i)$ for $1\leq i\leq n-1$. So for $1\leq i\leq r-n-1$, let $\sigma(i)=x_i$, and for $1\leq i\leq 2n-r$, let $\sigma(i+r-n-1)=l_i$. Here the $\sigma$-interval of length $r$, starting at index $2n$ and ending in index $r-1$, is precisely $A$. Now suppose that $n\notin A$. Let the $r-n$ edges be $\{\{x_1,y_1\},\ldots,\{x_{r-n},y_{r-n}\}\}$ and let $L=\{l_1,\ldots,l_{2n-r-1}\}$ be the other $2n-r-1$ vertices (excluding $2n$). A good cyclic ordering $\sigma$ in which $A$ is an interval can be constructed as follows: for $1\leq i\leq 2n-r-1$, let $\sigma(i)=l_i$ and for $2n-r\leq i\leq n-1$, let $\sigma(i)=x_{i-(2n-r-1)}$. In this case, the $\sigma$-interval of length $r$ ending in index $n-1$, is $A$.

For $r=n$, we observe by Lemma \ref{l1} that every good cyclic ordering in $C_{n-1}$ is $2n$-saturated. Again, we will show that every set in $\mathscr{P}^r_{2n}(M_n)$ is an interval in some $\sigma\in C_{n-1}$. Let $A\in \mathscr{P}^r_{2n}(M_n)$. Note that $A$ is a maximum independent set in $M_n$ and contains no edges. Let $V = A\cap [n-1]$, $|V|=s$, for some $s\leq r$ and let $W=A\setminus \{V\cup \{2n\}\}$. Let $V=\{v_1,\ldots,v_s\}$ and $W=\{w_1,\ldots,w_{r-s-1}\}$. Construct a good cyclic ordering $\sigma\in C_{n-1}$ as follows: for $1\leq i\leq s$, let $\sigma(i)=v_i$, and for $s+1\leq i\leq r-1$, let $\sigma(i)=w_{i-s}-n$. Then the $\sigma$-interval of length $r$, ending in index $s$, is $A$. This completes the proof of the theorem.
\qedsymbol

\end{document}